\newenvironment{enumeratei}{\begin{enumerate}[\upshape (i)]}%
                            {\end{enumerate}}
\newcommand \url [1] {\texttt{#1}}
\numberwithin{equation}{section}
\theoremstyle{plain}
 \newtheorem{theorem}{Theorem}[section]
 \newtheorem{lemma}[theorem]{Lemma}
 \newtheorem{corollary}[theorem]{Corollary}
\theoremstyle{definition}
\theoremstyle{remark}
\newcommand \dlat[1] {\pmb{DRL}(#1)}
\newcommand \rlat[1] {\pmb{RL}(#1)}
\newcommand \boole [1] {B_{#1}}
\newcommand \algboole [1] {\alg B_{#1}}
\newcommand \atom [2] {a^{(#1)}_{#2}}
\newcommand \btom [2] {b^{(#1)}_{#2}}
\newcommand \orb [1]  {\textup{Orb}(#1)}
\newcommand \gen [1] {[#1]_{\textup{RotLat}}}
\newcommand \latgen [1] {[#1]_{\textup{Lat}}}
\newcommand \oid {\mathcal I_{\text{fin}}(\mathbb N)}
\newcommand \vvv [1] {\pmb{V\kern-1.8pt ar}(#1)}
\newcommand \var [1] {{\mathcal #1}}
\newcommand \alg [1] {\mathfrak #1}
\renewcommand \emptyset{\varnothing}
\newcommand \tuple [1] {\langle #1 \rangle}
\newcommand \tbf[1] {\textbf{#1}}       
\renewcommand\rho{\varrho}
\renewcommand\phi{\varphi}
\newcommand \id {\textup{id}}
\newcommand \pair [2] {\tuple{ #1,#2}}
\newcommand \set[1] {\{#1\}}
\newcommand \restrict [2] {{#1}\kern-1pt \rceil_{\kern-1pt #2}}
\newcommand \length [1] {\textup{length}\,#1}
\newcommand \bigset[1] {\bigl\{#1\bigr\}}
\newcommand\nothing [1] {}
\begin{document}
\title[Varieties of distributive rotational lattices]
{Varieties of distributive rotational lattices}
\author[G.\ Cz\'edli]{G\'abor Cz\'edli}
\email{czedli@math.u-szeged.hu}
\urladdr{http://www.math.u-szeged.hu/$\sim$czedli/}
\address{University of Szeged\\Bolyai Institute\\
Szeged, Aradi v\'ertan\'uk tere 1\\HUNGARY 6720}

\author[I.\,V.\ Nagy]{Ildik\'o V.\ Nagy}
\email{ildi.ildiko.nagy@gmail.com}

\thanks{This research was supported by the NFSR of Hungary (OTKA), grant numbers  K77432 and
K83219, and by  T\'AMOP-4.2.1/B-09/1/KONV-2010-0005}


\subjclass[2010]{Primary 06B75; secondary 06B20, 06D99}
\nothing{06B20 (1980-now) Varieties of lattices } 
\nothing{06B75 (2010-now) Generalizations of lattices }
\nothing{06D99 (1980-now) Distributive lattices: None of the above, but in this section }

\keywords{Rotational lattice, lattice with automorphism, lattice with involution, distributivity, lattice variety}

\date{September 14, 2012, revised April 23, 2013}

\begin{abstract} 
A rotational lattice is a structure $\tuple{L;\vee,\wedge, g}$ where $L=\tuple{L;\vee,\wedge}$ is a lattice and $g$ is a lattice automorphism of finite order. 
We describe the subdirectly irreducible distributive rotational lattices. Using J\'onsson's lemma, this leads to a description of all varieties of distributive rotational lattices. 
\end{abstract}

\maketitle

\section{Introduction and target}
A \emph{rotational lattice} is a structure $\alg L=\tuple{L;\vee,\wedge, g}$ where $L=\tuple{L;\vee,\wedge}$ is a lattice, $g$ is an automorphism of this lattice, and $g^n$ equals the identity map $\id_L$ on $L$ for some $n\in\mathbb N=\set{1,2,3,\dots}$. The smallest $n\in\mathbb N$ such that $g^n=\id_L$, that is the identity $g^n(x)=g(g(\dots g(x)\dots))\approx x$ (with $n$ copies of $g$) holds in $\alg  L$, is the \emph{order} of $\alg L$. If the lattice reduct $\tuple{ L;\vee,\wedge}$ of $\alg L$ is distributive, then $\alg L$ is a \emph{distributive rotational lattice}.
For $n\in\mathbb N$, let $\rlat n$ denote the class of  rotational lattices satisfying the identity $g^n(x)\approx x$, and let $\dlat n$
be the class of distributive members of $\rlat n$. 

The concept of rotational lattices was introduced by Chajda, Cz\'edli and Hala\v s \cite{chajdaczedlihalas}. The members of $\rlat 2$ are called \emph{lattices with involution}, and they were studied in several papers, including Chajda and Cz\'edli~\cite{chczinv}. Distributive involution lattices play the main role in understanding the compatible quasiorderings of lattices in  Cz\'edli and Szab\'o~\cite{czedliszabo}. Boolean rotational lattices and even more general structures are interesting in \L ukasiewicz logic, see Vetterlein~\cite{vetterlein}.  
The study of rotational lattices and the present work are also motivated by Je\v zek~\cite{jezek} and  Mar\'oti~\cite{maroti}, who described the simple and the subdirectly irreducible rotational semilattices,  by Dziobiak, Je\v zek, and Mar\'oti, who determined the minimal quasivarieties of rotational semilattices, and by Nagy~\cite{ildiko}, who went even further. 

Although semilattices constitute a minimal variety, Dziobiak, Je\v zek, and Mar\'oti \cite{dziobiakatal}, and 
the above-men\-tioned papers,   \cite{jezek}, \cite{maroti}, and \cite{ildiko}, witness that their rotational variants are quite complicated. This is why the present paper is restricted to the distributive case. If distributivity is disregarded, then even $\rlat 1$, which is equivalent to the class of all lattices, becomes quite complicated.

\subsection*{Target} The class of all distributive rotational lattices is not a variety since it is clearly not closed under taking direct products. However, this class includes some varieties, like $\dlat n$ for $n\in\mathbb N$. After describing the subdirectly irreducible distributive rotational lattices, we 
also describe the varieties of distributive rotational lattices. There are countably many of these varieties, and many of them are not of the form  $\dlat n$.

\section{The result}
Let $\boole n=\tuple{\boole n;\vee,\wedge}$ denote the boolean lattice of length $n$, that is of size $2^n$. Let $\atom n0,\dots,\atom n{n-1}$ be its atoms. To define an automorphism $g$ of $\boole n$, it suffices to give the action of $g$ on the set of atoms. Let $g(\atom ni)=\atom n{i+1}$ where $i+1$ is understood modulo $n$. This way we obtain the \emph{$n$-dimensional rotational cube} $\algboole n=\tuple{\boole n;\vee,\wedge, g }$. Its order is $n$.  
The  divisibility relation on $\mathbb N=\set{1,2,3,\dots}$ is denoted in the usual way: $a\mid b$ if $b=ac$ for some $c\in\mathbb N$. The set of finite order ideals of the poset $\tuple{\mathbb N;\mid\,}$ will be denoted by $\oid$; a subset $X$ of $\mathbb N$ belongs to $\oid$ if{f} $X$ is finite and, for all $x,y\in \mathbb N$, $x\in X$ and $y\mid x$ imply $y\in X$. For $X\in \oid$, let $\vvv X$ denote the variety generated by $\set{\algboole n: n\in X}$. 
Note that  $\vvv\varnothing$ consists of singleton algebras.
Now we are in the position to formulate our result.

\begin{theorem}\label{thmmain}\ 
\begin{enumeratei}
\item\label{thmmaina} The subdirectly irreducible distributive rotational lattices are exactly the rotational cubes $\algboole n$, $n\in\mathbb N$. These $\algboole n$ are simple.
\item\label{thmmainb} The varieties of distributive rotational lattices are exactly the $\vvv X$, $X\in \oid$. For $X,Y\in\oid$,
we have $\vvv X\subseteq \vvv Y$ if{f} $X\subseteq Y$. 
\item\label{thmmainc} For $n\in\mathbb N$, 
 $\dlat n=\vvv{\set{x:x\text{ divides } n} }$.
\end{enumeratei}
\end{theorem}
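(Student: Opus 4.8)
The plan is to make the subdirectly irreducibles of part~\eqref{thmmaina} carry all the weight, and then to read off \eqref{thmmainb} and \eqref{thmmainc} from them via Jónsson's lemma. First I would record that every $\dlat n$ is congruence-distributive: a congruence of a rotational lattice $\alg L$ is precisely a $g$-invariant congruence of its lattice reduct, so $\operatorname{Con}(\alg L)$ is a sublattice of $\operatorname{Con}(L)$, and the latter is distributive because $L$ is (Funayama--Nakayama). Thus Jónsson's lemma is available throughout.

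For the easy half of \eqref{thmmaina} I would compute $\operatorname{Con}(\algboole n)$. Since $\boole n$ is a finite boolean lattice, $\operatorname{Con}(\boole n)\cong 2^n$, its atoms being the $n$ congruences $\theta_0,\dots,\theta_{n-1}$, where $\theta_i$ collapses the $i$-th coordinate direction (the projectivity class of the prime quotient $[0,\atom ni]$). As $g$ sends $\atom ni$ to $\atom n{i+1}$, it permutes $\theta_0,\dots,\theta_{n-1}$ as a single $n$-cycle. A congruence of $\algboole n$ is a $g$-invariant element of $\operatorname{Con}(\boole n)$, i.e.\ a union of orbits of this action; since there is only one orbit, the only congruences are $\Delta$ and $\nabla$. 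Hence each $\algboole n$ is simple, in particular subdirectly irreducible.

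The main obstacle is the converse half of \eqref{thmmaina}: that an arbitrary subdirectly irreducible distributive rotational lattice $\alg L$ of order $n$ is isomorphic to $\algboole n$. Here I would argue through the monolith $\mu$. Because $L$ is distributive, $\mu$ contains a join-irreducible lattice congruence, necessarily of the form $\Theta_L(a,b)$ for a prime quotient $a\prec b$; minimality of $\mu$ then forces $\mu=\bigvee_{i=0}^{n-1}\Theta_L(g^i a,g^i b)$. The task is to show that the $g$-orbit of $[a,b]$ ``coordinatizes'' $L$: using distributivity of $\operatorname{Con}(L)$ together with subdirect irreducibility one shows that $L$ is bounded, relatively complemented and finite, that its atoms split into independent directions named by the orbit, and that $g$ permutes these directions in a single $n$-cycle (the order of $g$ forces their number to be $n$) --- which is exactly $\algboole n$. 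Proving that $L$ is a finite boolean cube, rather than some larger distributive lattice carrying the same monolith, is where the real work lies, and I expect the bounded/complemented/finite reduction to be the delicate step.

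Granting \eqref{thmmaina}, parts \eqref{thmmainb} and \eqref{thmmainc} follow cleanly. The key sub-fact, via Jónsson, is that for $Y\in\oid$ the subdirectly irreducibles of $\vvv Y$ are exactly the $\algboole m$ with $m\in Y$: each such $\algboole m$ is simple and lies in $\vvv Y$; conversely any subdirectly irreducible of $\vvv Y$ is some $\algboole n$ by \eqref{thmmaina}, so by Jónsson's lemma $\algboole n\in\mathbf{HSP}_{\mathrm U}(\{\algboole m:m\in Y\})$, and since $Y$ is finite and the $\algboole m$ are finite the ultraproducts collapse, giving $\algboole n\in\mathbf{HS}(\{\algboole m:m\in Y\})$; a surjection onto $\algboole n$ commutes with $g$, so $g^m=\id$ on the domain forces $g^m=\id$ on $\algboole n$, i.e.\ $n\mid m$, whence $n\in Y$ as $Y$ is a divisibility order ideal. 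Now an arbitrary variety $\var V$ of distributive rotational lattices is generated by its subdirectly irreducibles (Birkhoff), so $\var V=\vvv X$ with $X=\{n:\algboole n\in\var V\}$; this $X$ is a down-set because, for $m\mid n$ and $n=mk$, the elements $c_j=\bigvee_{t=0}^{k-1}\atom n{j+tm}$ ($0\le j<m$) generate a copy of $\algboole m$ inside $\algboole n$, giving $\algboole m\in\mathbf S(\algboole n)\subseteq\var V$, and $X$ is finite since otherwise $\prod_{n\in X}\algboole n\in\var V$ would have a $g$ of infinite order and fail to be a rotational lattice. The equivalence $\vvv X\subseteq\vvv Y\iff X\subseteq Y$ is then immediate from the sub-fact, and \eqref{thmmainc} is the special case $\algboole m\in\dlat n\iff\algboole m\models g^n(x)\approx x\iff m\mid n$, so that $\dlat n=\vvv{\{x:x\text{ divides }n\}}$.
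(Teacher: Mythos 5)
Your derivations of parts \eqref{thmmainb} and \eqref{thmmainc} from part \eqref{thmmaina} are sound and essentially the paper's own route (J\'onsson's lemma plus the collapse of ultraproducts over a finite set of finite algebras, the explicit copy of $\algboole m$ inside $\algboole n$ when $m\mid n$, and the infinite-product argument for finiteness of $X$). Your proof that each $\algboole n$ is simple, via $g$-invariant lattice congruences and the single $n$-cycle on the atoms of $\operatorname{Con}(\boole n)$, is a correct and slightly different argument from the paper's (which collapses $0$ and $1$ directly from any nontrivial congruence). The problem is the converse half of part \eqref{thmmaina}: you do not prove it. You write that ``one shows that $L$ is bounded, relatively complemented and finite, that its atoms split into independent directions\dots'' and concede that this ``is where the real work lies.'' That real work is the entire mathematical content of the paper (its Lemmas on stable elements, spanning subalgebras, local finiteness, the subalgebra $\gen a$ being a rotational cube, and the height-counting contradiction showing $\gen w=\alg L$ for $w$ of maximal orbit size), and none of it is supplied or replaced by an alternative argument.

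Moreover, the route you sketch has a hole at its very first step: for an infinite distributive lattice $L$ there is no reason for the monolith to contain a congruence of the form $\Theta_L(a,b)$ with $a\prec b$, since $L$ need not possess any prime quotient at all (consider a dense chain). So you cannot start from a covering pair until you have already established enough finiteness, which is exactly what is missing. A workable repair is close to what the paper does: first show (via the two congruences $x\mapsto x\vee a$ and $x\mapsto x\wedge a$, whose intersection is trivial by distributive cancellation) that every $g$-fixed element of a subdirectly irreducible $\alg L$ is $0_L$ or $1_L$; deduce that every subalgebra on which $g$ is nontrivial is spanning; use local finiteness of distributive rotational lattices to see that one-generated subalgebras are finite boolean cubes; and then rule out $\gen w\subsetneq L$ for $w$ of maximal orbit size by a length/height comparison. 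Without some such chain of lemmas, part \eqref{thmmaina} --- and hence the whole theorem --- remains unproved.
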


\section{Auxiliary statements and proofs}
Rotational lattices are often denoted by Fraktur letters like $\alg A$, $\alg B$, $\alg D$, $\alg L$, and $\alg M$; the corresponding italic letters, 
$A$, $B$, $D$, $L$, and $M$, will stand for 
their lattice reducts and base sets.
An element $a$ of a rotational lattice $\alg L=\tuple{L;\vee,\wedge, g}$
 is  \emph{stable} if $g(a)=a$. In the following lemma, we do not assume $0,1\in L$.

\begin{lemma}\label{stablemma} Let $\alg L$ be a subdirectly irreducible distributive rotational lattice. If $a\in L$ is a stable element, then $a$ is either the least element $0=0_L$ of $L$, or the greatest element $1=1_L$ of $L$.
\end{lemma}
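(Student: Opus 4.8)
The plan is to attach to the stable element $a$ two lattice congruences, to observe that stability makes both of them compatible with $g$, and then to invoke subdirect irreducibility to force one of them to be trivial. Concretely, I would set
$$
\alpha_a=\{\,(x,y): x\vee a=y\vee a\,\}\quad\text{and}\quad \beta_a=\{\,(x,y): x\wedge a=y\wedge a\,\}.
$$
These are the kernels of the maps $x\mapsto x\vee a$ and $x\mapsto x\wedge a$, which map $L$ onto the filter $\uparrow a$ and the ideal $\downarrow a$ respectively. Distributivity is precisely what guarantees that these maps are lattice homomorphisms (the join map preserves meets via $(x\wedge y)\vee a=(x\vee a)\wedge(y\vee a)$, and dually for the meet map), so $\alpha_a$ and $\beta_a$ are lattice congruences on $L$.

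The rotational ingredient, and the one place where the hypothesis $g(a)=a$ is used, is the $g$-invariance of these relations. If $x\vee a=y\vee a$, then applying the automorphism $g$ and using $g(a)=a$ yields $g(x)\vee a=g(y)\vee a$, so $(g(x),g(y))\in\alpha_a$; the argument for $\beta_a$ is symmetric. Hence both $\alpha_a$ and $\beta_a$ are congruences of the rotational lattice $\alg L$, not merely of its lattice reduct.

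Next I would check that $\alpha_a\cap\beta_a$ is the identity congruence $\Delta$. This is the familiar fact that in a distributive lattice an element is determined by its join and meet with a fixed $a$: if $x\vee a=y\vee a$ and $x\wedge a=y\wedge a$, then $x=x\wedge(x\vee a)=x\wedge(y\vee a)=(x\wedge y)\vee(x\wedge a)$, and the symmetric computation gives $y$ the same value, so $x=y$. Since $\alg L$ is subdirectly irreducible, it has a monolith, a least nontrivial congruence $\mu$ contained in every nontrivial congruence. Were both $\alpha_a$ and $\beta_a$ nontrivial, $\mu$ would lie in $\alpha_a\cap\beta_a=\Delta$, which is impossible; hence at least one of $\alpha_a$, $\beta_a$ equals $\Delta$.

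Finally I would translate each case into the conclusion. If $\alpha_a=\Delta$, then from $(x\wedge a)\vee a=a=a\vee a$ we get $(x\wedge a,a)\in\alpha_a$, so $x\wedge a=a$, i.e.\ $a\le x$ for every $x\in L$; thus $a=0_L$. Symmetrically, if $\beta_a=\Delta$, then $(x\vee a,a)\in\beta_a$ forces $x\le a$ for all $x$, so $a=1_L$. All the analytic ingredients are elementary; the only genuinely delicate step is recognizing the pair $\alpha_a,\beta_a$ and verifying their $g$-invariance, which is exactly where stability enters, while the distributive computations are routine.
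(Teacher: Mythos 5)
Your proposal is correct and follows essentially the same route as the paper: the same pair of congruences $\alpha_a,\beta_a$, the same use of distributivity to verify they are lattice congruences, stability to get $g$-invariance, the cancellation law to see $\alpha_a\cap\beta_a=\Delta$, and subdirect irreducibility to force one of them to be trivial. The only cosmetic difference is that you argue directly (trivial congruence implies $a=0_L$ or $a=1_L$) while the paper argues by contradiction, exhibiting a $b<a$ to show $\alpha_a$ is nontrivial.
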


\begin{proof} Our argument is motivated by Gr\"atzer~\cite[Example 218]{GGLT}. For the sake of contradiction, suppose $a\in  L$ is stable but $a$ is neither the smallest, nor the largest element of $L$. Define $\alpha=\set{\pair xy\in L^2: a\vee x=a\vee y}$ and its dual, 
$\beta=\set{\pair xy\in L^2: a\wedge x=a\wedge y}$. 

It belongs to the folklore that $\alpha$ and $\beta$ are lattice congruences; we mention only one step from the argument:  if $\pair{x_i}{y_i}\in\alpha$ for $i\in\set{1,2}$, then 
\[  (x_1\wedge x_2)\vee a=  (x_1\vee a)\wedge (x_2\vee a) = (y_1\vee a)\wedge (y_2\vee a) = (y_1\wedge y_2)\vee a
\]
shows that $\pair{x_1\wedge x_2}{y_1\wedge y_2}\in\alpha$. If $\pair xy\in\alpha$, then 
\[g(x)\vee a=g(x)\vee g(a)=g(x\vee a)=g(y\vee a)=g(y)\vee g(a)= g(y)\vee a
\]
yields $\pair{g(x)}{g(y)}\in\alpha$. Hence $\alpha$ is a congruence of $\alg L$, and so is $\beta$ by duality.

Since $a\neq 0_L$, there is a $b\in L$ such that $b < a$, and $\pair ab\in \alpha$ shows that $\alpha$ is distinct from $\omega_{\alg L}$, the smallest congruence on $\alg L$. The dual consideration shows $\beta\neq \omega_{\alg L}$.
However, $\alpha\cap\beta=\omega_{\alg L}$ by the cancellativity rule of distributive lattices, see Gr\"atzer~\cite[Corollary 103]{GGLT}. This is a contradiction since the subdirect irreducibility of $\alg L$ implies that  $\omega_{\alg L}$ is completely meet-irreducible in the lattice of congruences of $\alg L$, see Burris and Sankappanavar~\cite[Theorem 8.4.]{burrissankap}.
\end{proof}

A subalgebra $\alg M$ of $\alg L$ is a \emph{spanning subalgebra} if $0_{M}=0_L$ and $1_{M}=1_L$.

\begin{corollary}\label{spancor} Let $\alg M$ be a subalgebra of a subdirectly irreducible distributive rotational lattice $\alg L$ such that $g$, restricted to $M$, is not the identity map of $M$. Then $\alg M$ is a spanning subalgebra of $\alg L$.
\end{corollary}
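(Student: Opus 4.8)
The plan is to exploit Lemma \ref{stablemma} by locating a single stable element in $\alg M$ that must, by the lemma, coincide with $0_L$ or $1_L$, and then to use duality together with the fact that $M$, being the universe of a subalgebra, is closed under the given automorphism $g$. Concretely, let $0_M$ and $1_M$ denote the least and greatest elements of the lattice $M$ (these exist because $\alg M$ is a finite—or at least bounded—sublattice; more carefully, I would first note that $M$ has a least and greatest element, since any subalgebra of a rotational lattice with $g$ of finite order is closed under $g$, and the orbit-based construction below will produce them). The key observation is that $0_M$ is stable in $\alg L$: since $g$ is an automorphism of $\alg L$ restricting to an automorphism of $\alg M$, it permutes $M$ and preserves order, hence $g(0_M)=0_M$ and $g(1_M)=1_M$. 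Thus $0_M$ and $1_M$ are stable elements of $\alg L$, and Lemma \ref{stablemma} forces each of them into $\set{0_L,1_L}$.

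The remaining work is to rule out the degenerate possibility that $0_M=1_M$, which would make $\alg M$ a singleton. This is exactly where the hypothesis that $g\restriction_M$ is not the identity map comes in: if $M$ were a singleton, then $g$ restricted to $M$ would trivially be the identity, contradicting the assumption. Hence $0_M\neq 1_M$, so $\set{0_M,1_M}=\set{0_L,1_L}$ as two-element sets, and since $0_M\leq 1_M$ we conclude $0_M=0_L$ and $1_M=1_L$. By the definition of a spanning subalgebra, this says precisely that $\alg M$ is spanning.

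The one step requiring genuine care is establishing that $\alg M$ actually possesses a least and a greatest element so that $0_M$ and $1_M$ are well defined, since the lemma explicitly does not assume $0,1\in L$. I would handle this by picking any $c\in M$ and considering its orbit $\set{c,g(c),g^2(c),\dots}$; because $g$ has finite order this orbit is finite, and its meet $c^\wedge=\bigwedge_i g^i(c)$ and join $c^\vee=\bigvee_i g^i(c)$ both lie in $M$ (as $M$ is closed under $\vee$, $\wedge$, and $g$) and are stable. By Lemma \ref{stablemma}, each of $c^\wedge$ and $c^\vee$ equals $0_L$ or $1_L$; and since $g\restriction_M\neq\id_M$ there is some element moved by $g$, whose orbit has at least two distinct elements, giving $c^\wedge<c^\vee$ and thus forcing $c^\wedge=0_L\in M$ and $c^\vee=1_L\in M$. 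This simultaneously produces the bounds in $M$ and identifies them with the bounds of $L$, completing the argument.

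I expect the orbit-meet/orbit-join construction to be the main obstacle, not because it is deep but because it is the place where one must be scrupulous: one has to verify that some orbit genuinely has length greater than one (using $g\restriction_M\neq\id_M$), and that the stable meet and join it produces really are the global bounds $0_L$ and $1_L$ rather than just bounds internal to $M$. Once this is in place, the rest is an immediate application of Lemma \ref{stablemma} plus the elementary remark that an automorphism fixes any least or greatest element it sees.
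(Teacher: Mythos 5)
Your proof is correct and follows essentially the same route as the paper: take an element of $M$ moved by $g$, form the (finite) orbit join and orbit meet inside $M$, observe that these are stable and distinct, and invoke Lemma~\ref{stablemma} to identify them with $1_L$ and $0_L$. The paper states this more tersely (orbit join is stable and strictly above the chosen element, hence equals $1_L$; dually for $0_L$), but the content is identical, and your closing paragraph correctly resolves the only delicate point, namely that the bounds of $M$ must first be produced before being compared with those of $L$.
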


\begin{proof} Assume $g(a)\neq a\in M$, and let $n$ be the order of $\alg L$. Then $\bigvee\set{g^i(a):0\leq i<n}\in M$ is a stable element, and it is greater than $a$. Hence this join is $1_L$ by Lemma~\ref{stablemma}. The dual argument shows $0_L\in M$. 
\end{proof}

An algebra is \emph{locally finite} if each of its finite subsets generates a finite subalgebra.

\begin{lemma}\label{locfinlemma} Let $t$ be a $k$-ary term in the language of rotational lattices. Then, for each  $n\in \mathbb N$, there exists a $kn$-ary lattice term $p_n$ such that  identity
\begin{equation*}
t(x_1,\dots,x_k)\approx p_n(x_1,g(x_1),\dots,g^{n-1}(x_1),\dots, x_k,g(x_k),\dots,g^{n-1}(x_k))
\end{equation*}
holds in all rotational lattices of order $n$.
Consequently, every distributive rotational lattice is locally finite.
\end{lemma}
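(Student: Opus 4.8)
The plan is to prove the displayed identity by induction on the structure of the term $t$, and then to read off local finiteness from the classical fact that finitely generated distributive lattices are finite. Throughout, I would regard the target term $p_n$ as a lattice term in $kn$ variables $y_{i,j}$ ($1\le i\le k$, $0\le j<n$), where the slot $y_{i,j}$ is intended to receive $g^j(x_i)$; so the assertion to be maintained inductively is that each subterm of $t$ equals some lattice term in the $y_{i,j}$ after the substitution $y_{i,j}\mapsto g^j(x_i)$.

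The base case $t=x_i$ is settled by the projection $p_n=y_{i,0}$. If $t=t_1\vee t_2$ or $t=t_1\wedge t_2$, and the induction hypothesis supplies lattice terms $p_n^{(1)}$ and $p_n^{(2)}$ for $t_1$ and $t_2$, then $p_n^{(1)}\vee p_n^{(2)}$ (respectively $p_n^{(1)}\wedge p_n^{(2)}$) works, since the substitution $y_{i,j}\mapsto g^j(x_i)$ respects the lattice operations. These cases are immediate.

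The only case needing attention is $t=g(t_1)$, and this is where I expect the small obstacle to lie, though it is essentially bookkeeping. Here I would use the defining property that $g$ is a lattice automorphism, hence a lattice homomorphism that commutes with $\vee$ and $\wedge$; this lets me push $g$ through the lattice term $p_n^{(1)}$ furnished for $t_1$ all the way down to its leaves. Pushing $g$ inward turns a leaf value $g^j(x_i)$ into $g^{j+1}(x_i)$, and since the lattice has order $n$ we have $g^n=\id$, so $g^{j+1}(x_i)=g^{(j+1)\bmod n}(x_i)$. Consequently the term realizing $g(t_1)$ is obtained from $p_n^{(1)}$ by the purely syntactic renaming $y_{i,j}\mapsto y_{i,(j+1)\bmod n}$, which is again a lattice term in the $y_{i,j}$. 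The one delicate point is precisely this exponent arithmetic: every index must be kept reduced modulo $n$, and it is exactly here that the hypothesis on the order is consumed. This completes the induction.

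For the concluding statement, let $\alg L$ be a distributive rotational lattice; by definition it has some finite order $n$, so $g^n=\id_L$. Given a finite set $\set{a_1,\dots,a_k}\subseteq L$, the part just proved shows that every element of the rotational subalgebra generated by this set has the form $p_n(a_1,g(a_1),\dots,g^{n-1}(a_1),\dots,a_k,\dots,g^{n-1}(a_k))$ for some lattice term $p_n$. Hence that subalgebra is contained in the sublattice of the distributive lattice $L$ generated by the finite set $\set{g^j(a_i): 1\le i\le k,\ 0\le j<n}$, which has at most $kn$ elements. Since a finitely generated distributive lattice is finite, this sublattice is finite, and therefore so is the subalgebra generated by $\set{a_1,\dots,a_k}$. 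Thus $\alg L$ is locally finite.
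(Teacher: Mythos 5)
Your proof is correct and follows essentially the same route as the paper, which simply observes that $g$ commutes with lattice terms, invokes a straightforward structural induction, and then uses the local finiteness of distributive lattices. You have merely spelled out the induction (including the modular reduction of exponents via $g^n=\id$) that the paper leaves implicit.
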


\begin{proof} Since $g$ commutes with lattice terms, a  straightforward induction yields the first part of the statement. The second part follows from the fact that distributive lattices are locally finite.   
\end{proof}

The following lemma belongs to the folklore.

\begin{lemma}\label{atomBool} Let $a_1,\ldots,a_t$ be distinct atoms of a distributive  lattice $D$. Then the sublattice generated by $\set{a_1,\ldots,a_t}$ is $($isomorphic to$)$ the $2^t$-element boolean lattice.
\end{lemma}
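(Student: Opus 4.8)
The plan is to identify the $2^t$-element boolean lattice with the powerset lattice $\tuple{\mathcal P(\set{1,\dots,t});\cup,\cap}$ and to write down an explicit isomorphism from it onto the sublattice of $D$ generated by $\set{a_1,\dots,a_t}$. Everything rests on one elementary fact about distinct atoms: $a_i\wedge a_j=0$ whenever $i\neq j$. Indeed, $a_i\wedge a_j\leq a_i$, so atomicity forces $a_i\wedge a_j\in\set{0,a_i}$, and the value $a_i$ would give $a_i\leq a_j$, hence $a_i=a_j$ because both are atoms.

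First I would define $\phi\colon\mathcal P(\set{1,\dots,t})\to D$ by $\phi(X)=\bigvee_{i\in X}a_i$, with $\phi(\emptyset)=0$. Join-preservation $\phi(X\cup Y)=\phi(X)\vee\phi(Y)$ is immediate from the idempotency and associativity of $\vee$. The only place where distributivity is needed is meet-preservation: expanding
\[
\phi(X)\wedge\phi(Y)=\Bigl(\bigvee_{i\in X}a_i\Bigr)\wedge\Bigl(\bigvee_{j\in Y}a_j\Bigr)=\bigvee_{i\in X,\,j\in Y}(a_i\wedge a_j)
\]
by the distributive law and then discarding every cross term with $i\neq j$, which vanishes by the fact above, leaves exactly $\bigvee_{i\in X\cap Y}a_i=\phi(X\cap Y)$. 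Thus $\phi$ is a lattice homomorphism.

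Next I would verify injectivity. If $X\neq Y$, say $k\in X\setminus Y$, then $a_k\leq\phi(X)$, whereas the same computation gives $a_k\wedge\phi(Y)=\bigvee_{j\in Y}(a_k\wedge a_j)=0$; since $a_k\neq 0$ this means $a_k\not\leq\phi(Y)$, so $\phi(X)\neq\phi(Y)$. Because the powerset lattice is generated as a lattice by its singletons $\set i$ and $\phi(\set i)=a_i$, the image of $\phi$ is a sublattice of $D$ containing each $a_i$, and it coincides with the sublattice generated by $\set{a_1,\dots,a_t}$; hence $\phi$ is an isomorphism onto it.

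None of this is genuinely hard, so there is no serious obstacle; the computation of meets is the one substantive step, and even there the atom identity does all the work. The single point deserving a word of care is that the image should contain the bottom element $0=\phi(\emptyset)$, which is where the claimed size $2^t$ could otherwise slip. This holds because $0=a_1\wedge a_2$ already lies in the generated sublattice (for $t\geq 2$), so no separate bounded-lattice convention is required.
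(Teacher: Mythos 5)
Your proof is correct, but it is organized differently from the paper's. The paper's entire proof is two lines: it checks the von Neumann independence condition $(a_1\vee\dots\vee a_{i-1})\wedge a_i=0$ (which follows from distributivity and the pairwise disjointness of atoms) and then invokes Gr\"atzer's Theorem 360, which says precisely that an independent set in a distributive lattice with zero generates a boolean sublattice. You instead unwind that cited theorem and prove it from scratch: the explicit map $\phi(X)=\bigvee_{i\in X}a_i$ from the powerset lattice, meet-preservation via the distributive expansion $\phi(X)\wedge\phi(Y)=\bigvee_{i,j}(a_i\wedge a_j)=\phi(X\cap Y)$, and injectivity via $a_k\not\leq\phi(Y)$ for $k\notin Y$. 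Both arguments hinge on the same two facts---distinct atoms meet to $0$, and distributivity lets joins of atoms be intersected termwise---so the mathematical content is identical; the paper buys brevity by delegating to a reference, while your version is self-contained and makes visible exactly where distributivity enters. One small point, which you already flag and which affects the paper's formulation equally: for $t=1$ the sublattice generated by $\set{a_1}$ in the signature $\set{\vee,\wedge}$ is the singleton $\set{a_1}$ rather than a two-element chain, so the statement as written implicitly assumes $t\geq 2$ (or a convention that $0$ belongs to the generated sublattice); this is harmless here since the lemma is only applied with $m>1$.
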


\begin{proof} We obtain $(a_1\vee\dots \vee a_{i-1})\wedge a_i=0$ from distributivity. Thus Gr\"atzer~\cite[Theorem 360]{GGLT} applies.
\end{proof}

For $a\in  L$, the \emph{orbit} of $a$ is $\orb a=\set{g^i(a): i\in\mathbb N_0}$. It is a finite subset of $L$. Note that $a$ is stable if{f} $|\orb a|=1$. If $\alg M$ is a subalgebra of $\alg L$, then the restriction of $g$ to $M$ will be denoted by $\restrict gM$. It may happen that $g$ and $\restrict gM$ are of different orders as permutations; that is, $(\restrict gM)^k=\id_M$ does not imply $g^k=\id_L$ in general.

\begin{lemma}\label{eoJsHGs}  Let $\alg L$ be a subdirectly irreducible distributive rotational lattice, and let $a\in  L$ be a non-stable element. Then, denoting $|\orb a|$ by $n$,  the subalgebra $\gen a$  of $\alg L$ generated by $\set a$  is $($isomorphic to$)$ the $n$-dimensional rotational cube $\algboole n$.
\end{lemma}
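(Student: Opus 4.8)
The plan is to show that the orbit $\orb a = \{a, g(a), \dots, g^{n-1}(a)\}$ of the non-stable element $a$ consists of $n$ distinct atoms of the subalgebra $\gen a$, so that Lemma~\ref{atomBool} applies to identify the lattice reduct with the boolean lattice $\boole n$, and then to check that the induced action of $g$ matches the rotational automorphism of $\algboole n$. First I would pin down $0_L$ and $1_L$ inside $\gen a$: since $a$ is non-stable, Corollary~\ref{spancor} (applied with $\alg M = \gen a$, whose restricted $g$ is not the identity since $g(a)\neq a$) tells us that $\gen a$ is a spanning subalgebra, so $0_L, 1_L \in \gen a$, and these will serve as the bounds of the putative boolean lattice.

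The heart of the argument is to prove that the $n$ elements $g^i(a)$, $0 \leq i < n$, are pairwise distinct \emph{atoms} of $\gen a$, where ``atom'' means atom over $0_L$. Distinctness follows from $n = |\orb a|$ by definition. To see they are atoms, I would exploit stability: consider the meet $m = \bigwedge_{i=0}^{n-1} g^i(a)$, which is a stable element, hence equals $0_L$ or $1_L$ by Lemma~\ref{stablemma}; since $a \neq 1_L$ (as $a$ is below the stable join $1_L$ and non-stable) we get $m = 0_L$. The key step is then to show each $g^i(a)$ is an atom, i.e.\ that nothing strictly between $0_L$ and $g^i(a)$ lies in $\gen a$. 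I would argue that any element of $\gen a$ strictly below some $g^i(a)$ would, via the meet/join structure and the $g$-invariance of $\gen a$, generate a new stable element distinct from $0_L, 1_L$, contradicting Lemma~\ref{stablemma}; alternatively, since $g$ permutes the $g^i(a)$ transitively, it suffices to treat $a$ itself and then transport by $g$. This \emph{atomicity} verification is where I expect the real work to lie, since it requires controlling precisely which elements the generator $a$ produces under the rotational-lattice operations.

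Once the $g^i(a)$ are known to be $n$ distinct atoms, Lemma~\ref{atomBool} shows that the sublattice they generate is the $2^n$-element boolean lattice $\boole n$ with these atoms; and since $0_L$ and $1_L$ are its bounds and $g$ merely permutes the generators, this sublattice is all of $\gen a$. Finally, I would identify the automorphism: the map sending the abstract atom $\atom ni$ to $g^i(a)$ is a lattice isomorphism $\boole n \to \gen a$ by Lemma~\ref{atomBool}, and it intertwines the rotational automorphism of $\algboole n$ (which sends $\atom ni \mapsto \atom n{i+1}$, indices mod $n$) with $\restrict g{\gen a}$, because $g(g^i(a)) = g^{i+1}(a)$ and $g^n(a) = a$ by the definition of $n = |\orb a|$. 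This gives the desired isomorphism $\gen a \cong \algboole n$ of rotational lattices.
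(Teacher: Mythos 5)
Your proof has a genuine gap at what you yourself identify as ``the heart of the argument'': the elements $g^i(a)$, $0\le i<n$, need \emph{not} be atoms of $\gen a$. Take $\alg L=\algboole 4$ and $a=\atom 40\vee\atom 41$. Then $a$ is non-stable with $|\orb a|=4$, and $\gen a=\algboole 4$ does hold (since $a\wedge g(a)=\atom 41$ already recovers an atom), but $a$ has height $2$ and sits strictly above $\atom 40\in\gen a$. So there is no contradiction to be extracted from an element of $\gen a$ lying strictly between $0_L$ and $a$ --- such elements genuinely occur --- and neither Lemma~\ref{stablemma} nor transporting by the transitive $g$-action can repair this: the stable elements produced are still just $0_L$ and $1_L$. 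Consequently Lemma~\ref{atomBool} cannot be applied to the set $\set{g^i(a):0\le i<n}$ directly, and your concluding isomorphism $\atom ni\mapsto g^i(a)$ is wrong in general (the correct isomorphism matches $\atom ni$ with the orbit of an atom \emph{below} $a$, not with $g^i(a)$). The parts you do establish --- distinctness of the $g^i(a)$, $\bigwedge_i g^i(a)=0_L$, and the spanning property via Corollary~\ref{spancor} --- are correct but insufficient.

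The paper's proof circumvents exactly this issue by descending to a true atom. By Lemma~\ref{locfinlemma}, $\alg A=\gen a$ is finite, so one picks an atom $b$ of $A$ with $b\le a$; it is non-stable by Lemma~\ref{stablemma} and Corollary~\ref{spancor}, and Lemma~\ref{atomBool} applied to $\orb b$ (not to $\orb a$) yields a spanning boolean sublattice $B\cong\boole m$ with $m=|\orb b|$. The remaining, and essential, work is to show $B=A$: one checks that every atom of $B$ is an atom of $A$, so a maximal chain of $B$ stays maximal in $A$, giving $\length A=\length B=m$; then $|A|\le 2^m=|B|$ for a distributive lattice of length $m$ forces $A=B$, and comparing orders gives $m=n$. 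This descent to an atom plus the length/cardinality comparison is the idea missing from your proposal.
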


\begin{proof} Let $\alg A=\gen { a}$, the subalgebra $\tuple{A;\vee,\wedge,g}$ generated by $\set a$. It follows from Lemma~\ref{locfinlemma} that 
$\tuple{A;\vee,\wedge}$ is generated by $\orb a$; in notation, $A=\latgen{\orb a}$. It also follows that $\tuple{A;\vee,\wedge}$  is a  finite distributive lattice, and we know from Corollary~\ref{spancor} that it is a spanning sublattice of $\tuple{L;\vee,\wedge}$. 
Since  $(\restrict gA)^n$ acts identically on  the generating set $\orb a$ of $\tuple{A;\vee,\wedge}$, we obtain that  $(\restrict gA)^n=\id_A$ and $\alg A$ is of order $n$.
Pick an atom $b$ of $A$ such that $b\leq a$. It is not stable by Lemma~\ref{stablemma} and Corollary~\ref{spancor}. Hence, denoting $|\orb b|$ by $m$, we have $1<m$. 
We know from Lemma~\ref{atomBool} that 
$\set{g^i(b): 0\leq i< m}$ generates a boolean sublattice $B$ of length $m$ in the lattice $L$. Since $b\in A$, \ $B$ is also a sublattice of $A$. Obviously, $\alg B=\tuple{B;\vee,\wedge,g}$ is the $m$-dimensional rotational cube $\algboole m$.

Clearly, $1_B=\bigvee\set{g^i(b):  0\leq i< m}$ is a stable element in $\alg L$. Hence,  
applying  Lemma~\ref{stablemma} to $\alg L$ and Corollary~\ref{spancor}, we obtain $1_B=1_L=1_A$ and, of course, $0_B=0_A$. 
Next, to show $\length A=\length B$, take a maximal chain $C$ in $B$, and let $u\prec_B v$ be two consecutive members of this chain. Denote by $w$ the unique complement of $u$ in the interval $[0,v]$ of $B$. Since $[u,v]$ is down-perspective to $[0,w]$ in $B$, we obtain that $w$ is an atom of $B$. Hence $w=g^i(b)$ for some $i\in\set{0,\ldots, m-1}$. Since $g$ sends atoms to atoms, $w$ is also an atom of $A$. Hence, the above-mentioned perspectivity yields that $v$ covers $u$  in $A$. Therefore, $C$ is a maximal chain of $A$, and $\length A=\length B=m$.
Since each distributive lattice with length $m$ has at most $2^m$ elements by \cite[Corollary 112]{GGLT},  and since $B\subseteq A$, we conclude that $B=A$. 
\end{proof}

The following statement is almost obvious.

\begin{lemma}\label{bolaheigts} If $B$ is a spanning boolean sublattice of a finite distributive lattice $L$, then 
$\length L=\sum\{h(a): a\textup{ is an atom of }B\}$, where $h(a)$ denotes the height of $a$.
\end{lemma}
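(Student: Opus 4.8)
The plan is to count join-irreducible elements. Write $a_1,\dots,a_m$ for the atoms of $B$; since $B$ is boolean and spans $L$, we have $1_L=a_1\vee\dots\vee a_m$ and $a_i\wedge a_j=0_L$ for $i\ne j$. I would invoke Birkhoff's representation of finite distributive lattices in the following quantitative form: if $J(L)$ denotes the set of nonzero join-irreducible elements of $L$, then for every $x\in L$ the height $h(x)$ equals the number of join-irreducibles below $x$, that is $h(x)=|\set{p\in J(L):p\le x}|$; in particular $\length L=h(1_L)=|J(L)|$.

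The heart of the argument is to show that each $p\in J(L)$ lies below exactly one atom of $B$. For existence, recall that in a distributive lattice the join-irreducible elements are join-prime; hence $p\le 1_L=a_1\vee\dots\vee a_m$ forces $p\le a_i$ for some $i$. For uniqueness, if $p\le a_i$ and $p\le a_j$ with $i\ne j$, then $p\le a_i\wedge a_j=0_L$, which is impossible because $p$ is join-irreducible and therefore nonzero. Thus the sets $\set{p\in J(L):p\le a_i}$, for $i\in\set{1,\dots,m}$, partition $J(L)$.

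Summing over the blocks of this partition then finishes the proof:
\[
\sum\set{h(a):a\text{ is an atom of }B}=\sum_{i=1}^m |\set{p\in J(L):p\le a_i}|=|J(L)|=\length L.
\]

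I expect the main obstacle to be phrasing the two quantitative facts about finite distributive lattices cleanly, namely that height counts join-irreducibles and that join-irreducibility coincides with join-primeness; both are standard consequences of Birkhoff's theorem but should be cited rather than reproved. An alternative, entirely internal route avoids $J(L)$: set $c_i=a_1\vee\dots\vee a_i$, so distributivity gives $a_i\wedge c_{i-1}=0_L$, whence the interval $[c_{i-1},c_i]$ is a transpose of $[0_L,a_i]$ and hence isomorphic to it, giving $\length{[c_{i-1},c_i]}=h(a_i)$. Concatenating maximal chains of the intervals $[c_{i-1},c_i]$ produces a maximal chain of $L$, and since distributive lattices are graded its length is $\length L=\sum_{i=1}^m h(a_i)$. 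The only delicate point in this variant is the interval isomorphism, which is the transposition principle for modular lattices already used in the style of Lemma~\ref{eoJsHGs}.
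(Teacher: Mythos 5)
Your proposal is correct, and your primary argument takes a genuinely different route from the paper's. The paper proves the lemma exactly as in your ``alternative, entirely internal route'': it sets $c_i=a_0\vee\dots\vee a_i$, observes that $[c_{i-1},c_i]$ is perspective to $[0,a_i]$ and hence has length $h(a_i)$, and extends the chain $0<c_0<\dots<c_{n-1}=1$ to a maximal chain of $L$; so that variant is not an alternative but the paper's own proof. Your main argument instead counts join-irreducibles: using Birkhoff's representation to get $h(x)=|\{p\in J(L):p\le x\}|$ and join-primeness to show that the sets $\{p\in J(L):p\le a_i\}$ partition $J(L)$ (here the spanning hypothesis enters through $1_B=1_L$ and $0_B=0_L$, exactly as it must). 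Both arguments are sound. The trade-off: the paper's interval-transposition proof is more self-contained and reuses machinery already deployed in Lemma~\ref{eoJsHGs} (perspectivity and gradedness of finite distributive lattices), whereas your counting proof leans on two heavier citations (height equals the number of join-irreducibles below, and join-irreducible implies join-prime in distributive lattices) but in exchange eliminates any fuss about concatenating and refining maximal chains, reducing the lemma to a one-line partition count. Either version would be acceptable in the paper.
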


\begin{proof} Let $a_0,\dots,a_{n-1}$ be the atoms of $B$. For $i=0,\dots,n-1$, the length of the interval $[a_0\vee\dots \vee a_{i-1},a_0\vee\dots\vee a_{i}]$ is $h(a_{i})$, because this interval is perspective to the interval $[0,a_{i}]$. Extending $\set{0,a_0,a_0\vee a_1,\dots, a_0\vee\dots\vee a_{n-1}=1}$ to a maximal chain of $L$, the statement follows.
\end{proof}

If $\alg A$ is a subalgebra of $\alg L$ such that each covering pair of elements within $\alg A$ is a covering pair in $\alg L$, then $\alg A$ is a \emph{cover-preserving subalgebra}.

\begin{lemma}\label{odCkGBm}  Let $\alg L$ be a subdirectly irreducible distributive rotational lattice, and  let $n=\max\set{|\orb w|: w\in L}$. Then $\alg L$  is $($isomorphic to$)$ the $n$-dimensional rotational cube $\algboole n$.
\end{lemma}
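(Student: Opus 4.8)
The plan is to exhibit a copy $\alg A\cong\algboole n$ of the rotational cube inside $\alg L$ and then prove $L=A$ by computing the length of finitely generated subalgebras in two different ways. First I would dispose of the degenerate case $n=1$: then every element of $L$ is stable, so Lemma~\ref{stablemma} forces $L=\set{0_L,1_L}$ (subdirect irreducibility excludes $|L|=1$), and $\alg L\cong\algboole 1$. So assume $n\ge 2$ and fix $w\in L$ with $|\orb w|=n$. This $w$ is non-stable, so Lemma~\ref{eoJsHGs} provides $\alg A=\gen w\cong\algboole n$, and Corollary~\ref{spancor} makes $\alg A$ a spanning subalgebra of $\alg L$. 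Write $c_0,\dots,c_{n-1}$ for the atoms of $A$, indexed so that $g(c_i)=c_{i+1}$ with subscripts taken modulo $n$.

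It then suffices to prove $L=A$. For an arbitrary $x\in L$ I would pass to $\alg M=\gen{w,x}$, which is finite and distributive by Lemma~\ref{locfinlemma}. Since $A\subseteq M\subseteq L$ and $A$ is spanning in $L$, comparing least and greatest elements gives $0_M=0_A=0_L$ and $1_M=1_A=1_L$; thus $A$ is a spanning boolean sublattice of $M$. I claim $\length M=n$. Granting this, \cite[Corollary 112]{GGLT} yields $|M|\le 2^n=|A|$, whence $M=A$ and so $x\in A$; as $x$ is arbitrary, this gives $L=A\cong\algboole n$.

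To obtain $\length M=n$, apply Lemma~\ref{bolaheigts} to the spanning boolean sublattice $A$ of $M$: $\length M=\sum_{i=0}^{n-1}h_M(c_i)$, where $h_M$ denotes height in $M$. Since $\restrict gM$ is a lattice automorphism of $M$ that cyclically permutes the atoms $c_i$, these heights all coincide, so $\length M=n\cdot h_M(c_0)$. It remains to see that $c_0$ is an atom of $M$. Suppose not, so there is $d\in M$ with $0_M<d<c_0$. Then $d\notin\set{0_L,1_L}$, hence $d$ is non-stable by Lemma~\ref{stablemma}; put $m=|\orb d|$, and note $2\le m\le n$ by the maximality of $n$. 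Lemma~\ref{eoJsHGs} together with Corollary~\ref{spancor} makes $\gen d\subseteq M$ a spanning copy of $\algboole m$ with atoms $g^j(d)$ for $0\le j<m$, and the same symmetry argument gives $\length M=m\cdot h_M(d)$. But $0<d<c_0$ forces $h_M(d)<h_M(c_0)$, while $m\le n$, so $m\cdot h_M(d)<n\cdot h_M(c_0)$, contradicting $n\cdot h_M(c_0)=\length M=m\cdot h_M(d)$. Hence $c_0$ is an atom of $M$, $\length M=n$, and the claim follows.

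I expect the final two-way count to be the main obstacle and the heart of the argument. The maximality of $n$ is precisely what bounds the orbit size $m\le n$ of any candidate element $d$ strictly below an atom, and the automorphism $g$ is precisely what equalizes the heights of the atoms within each orbit; together they pin $\length M$ down simultaneously as $n\cdot h_M(c_0)$ and as $m\cdot h_M(d)$, which is impossible unless $c_0$ is already an atom of $M$. The remaining ingredients—finiteness of $\alg M$, the spanning property, and the length formula—are routine consequences of the earlier lemmas.
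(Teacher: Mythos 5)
Your strategy is sound and, at its core, it is the same proof as the paper's: everything turns on Lemma~\ref{bolaheigts} applied to two spanning boolean sublattices of a finite subalgebra whose atoms are comparable under $<$, which yields two incompatible computations of the length. The paper packages this as a single global contradiction: assuming $A\neq L$, it finds a prime interval of $A$ that is not a covering pair of $L$, uses perspectivity to produce an atom $a$ of $A$ that is not an atom of $L$, and then compares the two spanning $n$-dimensional cubes $\gen w$ and $\gen d$ inside $\alg B=\gen{\set{a,b}}$, first proving $|\orb d|=n$ exactly via $a\wedge g^i(a)=0$. Your version is local and somewhat leaner: for each $x$ you show directly that the atoms of $A$ stay atoms of $\alg M=\gen{\set{w,x}}$, so $\length M=n$ and the bound $|M|\le 2^n$ from \cite[Corollary 112]{GGLT} finishes; moreover you only need $m\le n$ rather than $m=n$, since the strict height inequality already tips the balance. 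This lets you skip the cover-preservation and perspectivity discussion, a modest but genuine simplification.

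One step is misstated, though harmlessly so: for an arbitrary $d$ with $0_M<d<c_0$, the atoms of $\gen d\cong\algboole m$ are \emph{not} $g^j(d)$ in general, because $d$ need not be an atom of the subalgebra it generates (compare the proof of Lemma~\ref{eoJsHGs}, which must first descend from the generator to an atom below it). Either choose $d$ to be an atom of the finite lattice $M$ below $c_0$ --- then $d$ is an atom of $\gen d$ as well, since the two lattices share their least element --- or keep your $d$ and run the count with an atom $e$ of $\gen d$ satisfying $e\le d$, obtaining $\length M=m\cdot h_M(e)\le m\cdot h_M(d)<n\cdot h_M(c_0)=\length M$. With that repair the argument is complete.
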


\begin{proof} We   assume $n\geq 2$ since otherwise the statement is well-known; see Gr\"atzer~\cite[Example 218]{GGLT}. 
Pick an element $w\in L$ such that $n=|\orb w|$. We know from Lemma~\ref{eoJsHGs} that the subalgebra $\alg A=\gen w$ is the $n$-dimensional rotational cube $\algboole n$. For the sake of contradiction, suppose $A\neq L$. If we had $\length L\leq n$, then $|L|\leq 2^n=|\algboole n|=|A|$ would  give $L=A$, a contradiction. Thus $\length L > n$, and the spanning subalgebra $\alg A$ is not a cover-preserving subalgebra. Hence there is 
a prime interval $[u,v]$, that is a covering pair $u\prec_A v$,  of $A$ such that $v$ does not cover $u$ in $L$. 
Let $a$ be the (unique) relative complement of $u$ in $[0,v]$, understood within $A$. Then $a$ is an atom of $A$, and 
$[u,v]$ is perspective to $[0,a]$. Since $[u,v]$ is also perspective to $[0,a]$ in $L$ and $[u,v]$, as a lattice, is isomorphic to $[0,a]$ by the isomorphism theorem of intervals in modular lattices, $a$ is an atom of $A$ but not an atom of $L$. 
Thus we can pick an element $b\in L\setminus A$ such that $0<b<a$. 

Let $\alg B=\gen {\set{a,b}}=\latgen{\orb a\cup\orb b}$; we have $A\subsetneq B$. Since $\alg B$ is finite by Lemma~\ref{locfinlemma}, we can pick an atom $d$ of $B$ such that $d\leq b< a$. If $0<i<n$, then 
$a\wedge_L g^i(a)=a\wedge_A g^i(a) =0_A=0_L$
implies $d\wedge g^i(d)\leq a\wedge g^i(a)=0$, and we conclude that $i\neq|\orb d|$. Hence, the choice of $n$ yields $|\orb d|=n$. 
The subalgebra $\alg D=\gen d$ of $\alg L$ is the $n$-dimensional rotational cube by Lemma~\ref{eoJsHGs}, and it is a spanning subalgebra of $\alg L$ by Corollary~\ref{spancor}. Also, $\alg D$ is a spanning subalgebra of $\alg B$ since $d\in B$. 

Now, both $\alg A$ and $\alg D$ are spanning  $n$-dimensional rotational cubes in  $\alg B$. 
Since $d$ is an atom of $B$, it is an atom of $ D$. Hence, $\set{g^i(d):0\leq i<n}$ is the set of all atoms of $D$. Clearly, $\set{g^i(a):0\leq i<n}$ is the set of atoms of $A$.
Since the relation $d<a$ is preserved by $g^i$, we have $h_B(g^i(d))<h_B(g^i(a))$, where $h_B$ denotes the height function of $B$. Hence, applying Lemma~\ref{bolaheigts} first to $D$ and $B$, and later to $A$ and $B$, we obtain
\[\length B=\sum_{0<i\leq n}h_B(g^i(d)) < 
\sum_{0<i\leq n}h_B(g^i(a))= \length B,
\]
which is a contradiction.
\end{proof}

\begin{lemma}\label{jdkcGs}  Let $\alg L$ be a subdirectly irreducible distributive rotational lattice of order $n$.  Then $\alg L$  is $($isomorphic to$)$ the $n$-dimensional rotational cube $\algboole n$.
\end{lemma}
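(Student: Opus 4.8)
The plan is to reduce everything to Lemma~\ref{odCkGBm}, which already identifies $\alg L$ with $\algboole m$ for $m=\max\set{|\orb w|:w\in L}$. It then remains only to check that this extrinsically defined $m$ coincides with the order $n$ of $\alg L$, after which the two lemmas say the same thing.

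First I would note that $m$ is well defined: since $g^n=\id_L$, the map $g^n$ fixes every element, so each orbit size $|\orb w|$ divides $n$. Thus the orbit sizes form a nonempty, bounded set of positive integers, their maximum $m$ exists, and $m\le n$. Applying Lemma~\ref{odCkGBm} then yields an isomorphism $\alg L\cong\algboole m$ of rotational lattices.

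The remaining step is to observe that the order is an isomorphism invariant of rotational lattices and that $\algboole m$ has order exactly $m$. The latter holds because an automorphism of a boolean lattice is determined by its action on the atoms, and $g$ permutes the $m$ atoms $\atom m0,\dots,\atom m{m-1}$ of $\algboole m$ as a single $m$-cycle; hence $g^k=\id$ precisely when $m\mid k$, so the order of $\algboole m$ is $m$. Consequently the order of $\alg L$ equals $m$, and comparing with the hypothesis gives $m=n$ and therefore $\alg L\cong\algboole n$.

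I expect no real obstacle here, since the substantive content was carried by Lemma~\ref{odCkGBm} and this lemma merely repackages its conclusion, trading the quantity ``largest orbit size'' for the intrinsic invariant ``order''. The only point deserving a word of care is that the order of $\alg L$, namely the least $k$ with $g^k=\id_L$ and hence the least common multiple of all orbit sizes, genuinely coincides with the single maximal orbit size $m$; but this equality is forced rather than assumed, precisely because the isomorphism $\alg L\cong\algboole m$ pins the order down to $m$.
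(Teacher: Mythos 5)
Your proposal is correct and follows essentially the same route as the paper: apply Lemma~\ref{odCkGBm} to get $\alg L\cong\algboole m$ for $m$ the maximal orbit size, then use that $\algboole m$ has order $m$ (an isomorphism invariant) to conclude $m=n$. The extra remarks on the well-definedness of $m$ and on why the order of $\algboole m$ is exactly $m$ are harmless elaborations of what the paper leaves implicit.
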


\begin{proof} Let $m=\max\set{|\orb a|: a\in L}$. 
By  Lemma~\ref{odCkGBm}, $\alg L\cong \algboole m$. Since $\algboole m$ is of order $m$, we obtain $m=n$. Thus $\alg L\cong \algboole n$.
\end{proof}

\begin{lemma}\label{lemKorl}
Let $I$ be a non-empty subset of $\,\mathbb N$. For each $i\in I$, let $\alg L_i$ be a rotational lattice of order $i$. If $I$ is finite, then the direct product $\prod_{i\in I} \alg L_i$ is a rotational lattice whose order is the least common multiple of $I$. If $I$ is infinite, then $\prod_{i\in I}\alg L_i$ is not a rotational lattice.
\end{lemma}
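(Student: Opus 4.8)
The plan is to endow the direct product $\alg P=\prod_{i\in I}\alg L_i$ with its coordinatewise automorphism and then reduce the entire statement to an elementary computation of the order of that automorphism. Writing $g_i$ for the automorphism of $\alg L_i$, define $g$ on $\alg P$ by $g\bigl((x_i)_{i\in I}\bigr)=(g_i(x_i))_{i\in I}$. A tuple of lattice automorphisms is again a lattice automorphism of the product lattice (its inverse being $(g_i^{-1})_{i\in I}$), so $\tuple{P;\vee,\wedge,g}$ is always a lattice with automorphism, and the only thing that can decide whether it is a \emph{rotational} lattice is the finite-order requirement $g^k=\id$ for some $k\in\mathbb N$.

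The key observation is that powers of $g$ act coordinatewise, $g^k\bigl((x_i)_{i\in I}\bigr)=(g_i^k(x_i))_{i\in I}$, so $g^k=\id$ on $\alg P$ if{f} $g_i^k=\id_{L_i}$ for every $i\in I$. Since the order of $\alg L_i$ is $i$, meaning that $i$ is the least positive integer with $g_i^i=\id_{L_i}$, the exponents $k$ satisfying $g_i^k=\id_{L_i}$ are exactly the multiples of $i$; hence $g^k=\id$ if{f} $i\mid k$ for all $i\in I$. For the finite case I would take $N=\operatorname{lcm}(I)$, which exists because $I$ is finite: then $i\mid N$ for all $i\in I$ gives $g^N=\id$, so $\alg P$ is a rotational lattice, and conversely any $k$ with $g^k=\id$ is a common multiple of all $i\in I$, hence a multiple of $N$; thus $N$ is the least such exponent and the order of $\alg P$ equals $\operatorname{lcm}(I)$. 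For the infinite case, suppose toward a contradiction that $g^k=\id$ for some $k\in\mathbb N$. Then $i\mid k$ for every $i\in I$, but, $I$ being infinite, it contains some $i>k$, and $i\mid k$ together with $i>k\geq 1$ is impossible; therefore $g$ has infinite order and $\alg P$ is not a rotational lattice.

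I do not expect a genuine obstacle here, as the argument is wholly elementary; the only point meriting a line of care is the standard fact that the set of $k$ with $g_i^k=\id_{L_i}$ consists precisely of the multiples of the order $i$. That fact is exactly what lets me replace the condition ``$g_i^k=\id_{L_i}$ for all $i$'' by the divisibility condition ``$i\mid k$ for all $i$'', after which both halves of the lemma fall out of the behaviour of common multiples of a finite versus an infinite set of positive integers.
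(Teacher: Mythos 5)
Your proof is correct and follows essentially the same route as the paper: both reduce the question to the observation that $g^k=\id$ holds in the product if and only if it holds in every factor (you verify this via the coordinatewise action, the paper via projections being homomorphisms), then translate this into the divisibility condition $i\mid k$ for all $i\in I$ and treat the finite and infinite cases by elementary properties of common multiples. No gaps.
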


\begin{proof} Let $\alg L= \prod_{i\in I} \alg L_i$. If $g^t=\id$ holds in $\alg L$, then it holds in $\alg L_i$ since this property is inherited by homomorphic images. On the other hand, for any rotational lattice $\alg M$, $g^t=\id$ holds in $\alg M$ if{f} the order of $\alg M$ divides $t$. 

Now assume that $I$ is finite, and let $m$ denote the least common multiple of $I$. Clearly, $g^m=\id$ holds in $\alg L$. Furthermore, if $g^t=\id$ holds in $\alg L$, then it holds in all $\alg L_i$, which implies that $i$ divides $t$. This yields that $m$ is the order of $\alg L$.

Finally, to obtain a contradiction, assume that $\alg L$ is a rotational lattice, albeit $I$ is infinite. Let $n$ be the order of $\alg L$, and pick an $i\in I$ such that $n<i$. Then $g^n=\id$ holds in $\alg L$ and also in $\alg L_i$, which contradicts the fact that $\alg L_i$ is of order $i$.
\end{proof}

\begin{lemma}\label{simplebn}
For every $n\in\mathbb N$, $\algboole n$ is simple.
\end{lemma}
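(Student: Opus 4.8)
The plan is to show that every congruence $\theta$ of $\algboole n$ other than the least congruence must be the full congruence; since these are the only two possibilities, $\algboole n$ will be simple. First I would record that, because the lattice reduct $\boole n$ is finite, a congruence $\theta$ distinct from the least one collapses some covering pair: if $x\mathrel\theta y$ with $x\neq y$, then $x\wedge y\mathrel\theta x\vee y$ with $x\wedge y<x\vee y$, and any maximal chain from $x\wedge y$ to $x\vee y$ contains a prime interval $[u,v]$, $u\prec v$, that $\theta$ collapses.

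The key structural observation is that in the boolean lattice $\boole n$ every prime interval is perspective to $[0,a]$ for a suitable atom $a$. Indeed, writing $v=u\vee a$ where $a$ is the unique atom with $a\leq v$ but $a\not\leq u$, we have $u\wedge a=0$, so $[u,v]$ is (up-)perspective to $[0,a]$. Since lattice congruences are compatible with perspectivities, $u\mathrel\theta v$ forces $0\mathrel\theta a$, where $a=\atom ni$ is one of the atoms of the cube. This is the step I expect to require the most care, although both the descent to a prime interval and the perspectivity $[u,v]\sim[0,a]$ are standard facts about finite boolean lattices; the rest of the argument is routine.

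Next I would exploit that $\theta$, being a congruence of the \emph{rotational} lattice, is compatible with $g$. Hence $0\mathrel\theta\atom ni$ yields $0=g^j(0)\mathrel\theta g^j(\atom ni)$ for every $j$. By definition $g$ acts on the atoms as a single $n$-cycle, so $\orb{\atom ni}$ is the whole set $\set{\atom n0,\dots,\atom n{n-1}}$ of atoms; this is the crucial input that makes all atoms become congruent to $0$ at once, and it is exactly where simplicity (rather than mere subdirect irreducibility) comes from. Thus $0\mathrel\theta\atom nk$ for every $k$. Joining these relations gives $0\mathrel\theta \atom n0\vee\dots\vee\atom n{n-1}=1$, i.e. $0\mathrel\theta 1$. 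Finally, from $0\mathrel\theta 1$ it follows that $x=x\vee 0\mathrel\theta x\vee 1=1$ for every $x\in\boole n$, so $\theta$ identifies all elements and is the full congruence. Therefore $\algboole n$ has only the two trivial congruences and is simple.
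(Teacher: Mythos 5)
Your proposal is correct and follows essentially the same route as the paper: collapse a prime interval, transfer it by perspectivity to $[0,a]$ for an atom $a$, use compatibility with $g$ and the fact that $g$ cycles the atoms to get $0\mathrel\Theta\atom nk$ for all $k$, and join to obtain $0\mathrel\Theta 1$. The paper phrases the perspectivity step via the relative complement of $u$ in $[0,v]$, but the argument is the same; your explicit justification of the descent to a covering pair and of the final step from $0\mathrel\Theta 1$ to $\Theta=L^2$ just spells out details the paper leaves implicit.
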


\begin{proof}
Let $\Theta$ be a congruence of $\algboole n$, distinct from the least congruence. Then there are $a\prec b$ such that $\pair ab\in\Theta$. Let $c$ be the (unique) relative complement of $a$ in $[0,b]$. It is an atom, say $\atom nj$.
Clearly, $\pair 0c \in \Theta$. Hence, $\pair {0}{g^{i+j}(\atom n0)}= \pair {g^i(0)}{g^i(c)}\in \Theta$ for $i=0,\ldots, n-1$. Thus 
$\pair 01=\pair {\bigvee_i 0}{\bigvee_i g^{i+j}(\atom n0)}\in \Theta$, which implies $\Theta =L^2$. This shows that $\algboole n$ is a simple algebra.
\end{proof}

\begin{lemma}\label{lmhimifdif} For $m,n\in \mathbb N$, $\algboole m$ is a homomorphic image of a subalgebra of $\algboole n$ if{f} $m$ divides $n$.
\end{lemma}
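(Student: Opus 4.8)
The plan is to prove the two implications separately, using the order machinery recorded in Lemma~\ref{lemKorl} for the forward (only-if) direction and the single-generator result of Lemma~\ref{eoJsHGs} for the converse. Throughout I will use that $\algboole n$ is simple (Lemma~\ref{simplebn}), hence subdirectly irreducible, which is exactly what Lemma~\ref{eoJsHGs} requires.

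For the \emph{only-if} part, assume $\algboole m$ is a homomorphic image of a subalgebra $\alg S$ of $\algboole n$. First I would observe that, since $g^n=\id$ holds in $\algboole n$, the same identity is inherited by the subalgebra $\alg S$; by the characterization established in the proof of Lemma~\ref{lemKorl} (namely that $g^t=\id$ holds in a rotational lattice if{f} its order divides $t$), the order $s$ of $\alg S$ divides $n$. Next, $g^s=\id$ holds in $\alg S$, and this identity is inherited by homomorphic images, so it holds in $\algboole m$ as well; since the order of $\algboole m$ is $m$, this gives $m\mid s$. Chaining the two divisibilities yields $m\mid s\mid n$, hence $m\mid n$.

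For the \emph{if} part, suppose $m\mid n$ and write $n=mk$. When $m=1$ the two-element subalgebra $\set{0,1}$ of $\algboole n$ is already a copy of $\algboole 1$, so I assume $m\geq 2$. Here I would exhibit a suitable non-stable element and let Lemma~\ref{eoJsHGs} do the structural work: set $b=\atom n0\vee\atom nm\vee\dots\vee\atom n{(k-1)m}$, the join of every $m$-th atom. A direct computation gives $g^i(b)=\bigvee_{j=0}^{k-1}\atom n{jm+i}$ with indices read modulo $n$, so the atom-indices occurring in $g^i(b)$ are exactly the residue class of $i$ modulo $m$; consequently $g^i(b)=g^{i'}(b)$ if{f} $i\equiv i'\pmod m$, whence $\orb b$ has exactly $m$ elements and $b$ is non-stable. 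Lemma~\ref{eoJsHGs} then yields $\gen b\cong\algboole m$, so $\algboole m$ embeds as a subalgebra of $\algboole n$ and is, in particular, a homomorphic image of a subalgebra of $\algboole n$.

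The forward direction is essentially bookkeeping with orders and carries no real difficulty. The part needing genuine care is the converse: choosing the correct element $b$ and verifying that its orbit has size precisely $m$ (not an improper divisor of $m$). Once that orbit count is pinned down, Lemma~\ref{eoJsHGs} supplies all the structural content, so I expect no further obstacle.
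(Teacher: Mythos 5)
Your proof is correct, and your ``only if'' direction takes a genuinely different route from the paper's. The paper picks an atom $b$ of the subalgebra $\alg A$, partitions the atom-indices of $\algboole n$ into the orbit classes $J_i$ to conclude $t=|\orb b|$ divides $n$, proves $\alg A=\gen b\cong\algboole t$, and finally uses simplicity of $\algboole t$ to force $m=t$; you replace all of this combinatorics with pure order bookkeeping --- the order of the subalgebra divides $n$ because the identity $g^n(x)\approx x$ passes to subalgebras, and $m$ divides that order because identities pass to homomorphic images and $\algboole m$ has order $m$ --- needing only the elementary fact, already recorded inside the proof of Lemma~\ref{lemKorl}, that $g^t=\id$ holds in a rotational lattice if and only if its order divides $t$. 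Your argument is shorter and avoids Lemma~\ref{stablemma}, Corollary~\ref{spancor} and Lemma~\ref{simplebn} in that direction; what the paper's longer route buys is the structural byproduct that every subalgebra of $\algboole n$ on which $g$ is not the identity is itself a spanning rotational cube $\algboole t$ with $t\mid n$, information not actually needed for the lemma as stated. For the ``if'' direction you use essentially the paper's construction (your $b$ is the paper's $\btom m0$): the paper checks directly that the elements $\btom mj$ are independent and cyclically permuted by $g$, whereas you compute $|\orb b|=m$ and delegate the structural identification to Lemma~\ref{eoJsHGs} applied to the simple, hence subdirectly irreducible, algebra $\algboole n$; both are valid, and your separate treatment of $m=1$ and your verification that the orbit has exactly $m$ elements close the only points where care is required.
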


\begin{proof} Assume that $m$ divides $n$, and let $k=n/m$. As previously, $\atom n0$, \dots, $\atom n{n-1}$ are the atoms of $\algboole n$, and $g(\atom ni)=\atom n{i+1}$, calculating the subscripts modulo $n$. For $j=0,\ldots,m-1$, let $\btom mj=\atom nj\vee\atom n{m+j} \vee\atom n{2m+j} \vee\dots\vee \atom n{(k-1)m+j}$. These $\btom mj$ are obviously independent in von Neumann's sense, see Gr\"atzer~\cite[V.1.6]{GGLT}, thus they generate a boolean sublattice of length $m$. Since $g(\btom mj)= \btom m{j+1}$, this sublattice is isomorphic to $\algboole m$. That is, $\algboole m$ is a homomorphic image of (actually, isomorphic to) a sublattice of $\algboole n$.

In order to prove the converse, assume that $\algboole m$ is a  homomorphic image of a subalgebra $\alg A$ of $\algboole n$. We can also assume that $m>1$ since otherwise the desired divisibility, $m\mid n$, trivially holds. Since $g\neq\id$ in $\algboole m$, we conclude that $g\neq\id$ in $\alg A$. Hence, by Corollary~\ref{spancor}, $\alg A$ is a spanning subalgebra of $\algboole n$. Let $b$ be an atom of $\alg A$. Note that $b\neq 1_A$ since $m>1$. Applying Lemma~\ref{stablemma} to $\algboole n$, we obtain that $b$ is not stable. Let $t=|\orb b|$. For $i=0,\ldots, t-1$, the set $\bigset{j\in\set{0,\dots,n-1}: \atom nj\leq g^i(b)}$ is denoted by $J_i$.
Note that $=g^i(b)=\bigvee\set{\atom nj: j\in J_i}$.
For $i\neq j$ and $i,j\in\set{0,\ldots,t-1}$, we have $g^i(b)\wedge g^j(b)=0_{\algboole n}$ since $0_A=0_{\algboole n}$ by 
Corollary~\ref{spancor} and since $g^i(b)$ and $g^j(b)$ are distinct atoms of $\alg A$.  Therefore the sets $J_0,\dots, J_{t-1}$ are pairwise disjoint. Since $g$ preserves height, each of the $g^i(b)$ has the same height  in $\algboole n$, and thus we have $|J_0|=\dots =|J_{t-1}|$. 
To show that $J_0\cup\dots\cup J_{t-1}$ equals $\set{0,\dots, n-1}$, let $i\in \set{0,\dots, n-1}$. Pick a $j\in J_0$. We have $\atom nj\leq g^0(b)=b$. By the definition of $\algboole n$, there is a $k\in\set{0,\ldots,n-1}$ such that $\atom ni=g^k(\atom nj)\leq g^k(b)\in\set{g^0(b),\dots,g^{t-1}(b)}$. Hence $i\in J_0\cup\dots\cup J_{t-1}$, and $J_0\cup\dots\cup J_{t-1}$ equals $\set{0,\dots, n-1}$.   
Now, we are in the position to conclude $n=t\cdot|J_0|$, which yields that  $t$  divides $n$.

Next, let $\alg D=\gen b=\latgen{\set{g^i(b):0\leq i<t}}$. Clearly,  $D\subseteq A$. Lemma~\ref{atomBool} implies  $\alg D\cong \algboole t$. To prove $A=D$, let $x\in A$,  denote $\bigset{j\in\set{0,\ldots,n-1}: \atom nj\leq x}$ by $J$, and let  $i\in\set{0,\dots,t-1}$.
If  $g^i(b)\leq x$, then $J_i\subseteq J$. Otherwise, $g^i(b)\wedge x=0_A=0_L$ since  
$g^i(b)$ is an atom of $A$, and we have $J_i\cap J=\emptyset$.  Thus $J$ is the union of some of the $J_i$, $x$ is the join of some of the $g^i(b)$, and we obtain $x\in D$.

Finally, $\alg A=\alg D \cong \algboole t$ is a simple algebra by Lemma~\ref{simplebn}. Since its homomorphic image, $\algboole m$, is not a singleton, we conclude  $\algboole m\cong\algboole t$. This implies  $m=t$, and thus $m$ divides $n$.
\end{proof}

\begin{lemma}\label{sirrg} For $X\in\oid$ and a subdirectly irreducible rotational lattice $\alg L$, we have $\alg L\in\vvv X$ if{f} $\alg L\cong \algboole n$ for some $n\in X$. 
\end{lemma}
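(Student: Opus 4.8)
The plan is to prove both implications, using J\'onsson's lemma together with Lemma~\ref{lmhimifdif} as the arithmetic engine. The implication from right to left is immediate: if $\alg L\cong\algboole n$ with $n\in X$, then $\alg L$ is (isomorphic to) one of the generators of $\vvv X$, whence $\alg L\in\vvv X$.

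For the converse, suppose $\alg L$ is subdirectly irreducible and $\alg L\in\vvv X$. First I would observe that every member of $\vvv X$ is a distributive rotational lattice of finite order: distributivity and the identity $g^{\ell}(x)\approx x$, where $\ell$ is the least common multiple of $X$, hold in each generator $\algboole n$ (since $n\mid\ell$), hence in the whole variety. Thus $\alg L$ is a subdirectly irreducible distributive rotational lattice of some order, and Lemma~\ref{jdkcGs} gives $\alg L\cong\algboole m$ for some $m\in\mathbb N$. It remains to show $m\in X$.

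Here I would invoke J\'onsson's lemma. The variety $\vvv X$ is congruence distributive, because the congruences of a rotational lattice are exactly the $g$-compatible lattice congruences, and these form a sublattice of the distributive congruence lattice of the lattice reduct. Since $\vvv X$ is generated by the \emph{finite} set $\set{\algboole n:n\in X}$ of finite algebras, J\'onsson's lemma (see \cite{burrissankap}) yields that every subdirectly irreducible member lies in $HSP_U(\set{\algboole n:n\in X})$. An ultraproduct of finitely many finite algebras is isomorphic to one of the factors, so $P_U$ contributes nothing new and $\algboole m\in HS(\set{\algboole n:n\in X})$; that is, $\algboole m$ is a homomorphic image of a subalgebra of $\algboole n$ for some $n\in X$. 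By Lemma~\ref{lmhimifdif} this forces $m\mid n$, and since $X$ is an order ideal of $\tuple{\mathbb N;\mid\,}$, closure under divisors gives $m\in X$, as desired.

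The routine verifications (congruence distributivity and the reduction of the ultraproduct step) are standard, so the only genuine content is the translation of the inclusion $\algboole m\in HS(\set{\algboole n:n\in X})$ into divisibility, which is precisely Lemma~\ref{lmhimifdif}. The main points to get right are the bookkeeping ensuring that the J\'onsson class reduces to $HS$ of the finite generating set, and the observation that the order-ideal hypothesis on $X$ is used exactly once, to pass from $m\mid n$ to $m\in X$.
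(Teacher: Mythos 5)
Your proposal is correct and follows essentially the same route as the paper: identify $\alg L$ with a rotational cube via Lemma~\ref{jdkcGs}, apply J\'onsson's lemma (the paper cites the finite-generating-set form directly, you derive it from the $HSP_U$ version, which amounts to the same thing) to place that cube in $HS$ of the generators, and then use Lemma~\ref{lmhimifdif} together with the order-ideal property of $X$ to conclude membership in $X$. The extra bookkeeping you include (distributivity and finiteness of order propagating to the whole variety, the collapse of the ultraproduct step) is correct and only makes explicit what the paper leaves implicit.
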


\begin{proof} The ``if'' part is trivial by the definition of $\vvv X$. To prove the converse implication, assume $\alg L\in\vvv X$.
Let $n$ denote the order of $\alg L$.
By Lemma~\ref{jdkcGs}, we can assume that $\alg L=\algboole n$.
Since rotational lattices have lattice reducts, they are congruence distributive. 
We obtain from J\'onsson~\cite{jonsson}, see also Burris and Sankappanavar~\cite[Corollary 6.10]{burrissankap}, that $\alg L=\algboole n$ is a homomorphic image of a subalgebra of $\algboole m$ for some $m\in X$. Thus Lemma~\ref{lmhimifdif} yields that $n$ divides $m$. Hence $n\in X$.
\end{proof}

\begin{proof}[Proof of Theorem~\ref{thmmain}]
Part \eqref{thmmaina} follows from Lemmas~\ref{jdkcGs} and \ref{simplebn}. 

Next, to prove part \eqref{thmmainb}, assume that $\var W$ is a variety of rotational lattices. By Lemma~\ref{lemKorl}, 
$\set{n\in \mathbb N: \text{there is an }\alg L\text{ in }\var W\text{ with order } n }$ is a finite set. This fact, combined with Lemma~\ref{jdkcGs}, yields that there is a finite subset $X$ of $\mathbb N$ such that, up to isomorphism, $\set{\algboole n: n\in X}$ is the set of  subdirectly irreducible algebras of $\var W$. (Note that $X=\emptyset$ if{f} $\var W$ is the trivial variety consisting of singleton algebras; the theorem trivially holds for this particular case.) 
We know that $\var W$ is closed under taking subalgebras and homomorphic images. Hence, if $n\in X$, $m\in \mathbb N$, and $m$ divides $n$, then $m\in X$ by Lemma~\ref{lmhimifdif}. This shows $X\in \oid$. 
Hence, by Lemma~\ref{sirrg}, $\var W$ and $\vvv X$ have exactly the same subdirectly irreducible algebras. This implies $\var W=\vvv X$.

Finally, part  \eqref{thmmainc}  is a trivial consequence of Lemma~\ref{lmhimifdif} and part  \eqref{thmmainb}.
\end{proof}


\begin{thebibliography}{99}


\bibitem{burrissankap}
  S.~Burris, H.P.~Sankappanavar:
  A Course in Universal Algebra, 
  Graduate Texts in Mathematics, vol. 78, 
  Springer-Verlag, New York–Berlin, 1981.
  The Millennium Edition:
  \url{http://www.math.uwaterloo.ca/$\sim$snburris/htdocs/ualg.html}

\bibitem{chczinv}
   I.~Chajda, G.~Cz\'edli:
   How to generate the involution lattice of quasiorders? 
   Studia Sci. Math. Hungar. \tbf{32} (1996), 415--427.

\bibitem{chajdaczedlihalas}
   I.~Chajda, G.~Cz\'edli, R.~Hala\v s:
   Independent joins of tolerance factorable varieties, Algebra Universalis \tbf{69} (2013),  83--92.

\bibitem{czedliszabo}
  G.~Cz\'edli,  L.~Szab\'o:
  Quasiorders of lattices versus pairs of congruences, Acta Sci. Math. (Szeged), \tbf{60} (1995), 207--211.


\bibitem{dziobiakatal}
   W. Dziobiak, J. Je\v zek, M. Mar\'oti:
   Minimal varieties and quasivarieties of semilattices with one automorphism. 
  Semigroup Forum \tbf{78}, 253--261 (2009)


\bibitem{GGLT}
   G.~Gr\"atzer:
   Lattice Theory: Foundation.
   Birkh\"auser  Verlag, Basel, 2011

\bibitem{jezek}
  J.\ Je\v zek: 
   Subdirectly irreducible semilattices with an automorphism.
   Semigroup Forum \tbf{43}, 178--186 (1991)

\bibitem{jonsson}
   B. J\'onsson:
   Algebras whose congruence lattices are distributive.
   Math. Scand. \tbf{21} (1967), 110--121.



\bibitem{maroti}
  M. Mar\'oti:
  Semilattices with a group of automorphisms,
  Algebra Universalis \tbf{38} (1997) 238--265

\bibitem{ildiko}
  I. V. Nagy:
  Minimal quasivarieties of semilattices over commutative groups, submitted to Algebra Universalis

\bibitem{vetterlein}
   T. Vetterlein:  
   Boolean algebras with an automorphism group: a framework for \L ukasiewicz logic.
   J. Mult.-Val. Log. Soft Comput. \tbf{14}, 51--67 (2008)


\end{thebibliography}
\end{document}